\definecolor{orange}{rgb}{1,0.5,0}
\newtheorem{thm}{Theorem}
\newtheorem{coro}[thm]{Corollary}
\newtheorem{lemma}[thm]{Lemma}
\newtheorem{propo}[thm]{Proposition}
\theoremstyle{definition}
\newtheorem*{defn}{Definition}
\newcommand{\E}{{\rm E}}
\title{On index expectation and curvature for networks}
\author{Oliver Knill}
\date{Feb 20, 2012}
\address{
        Department of Mathematics \\
        Harvard University \\
        Cambridge, MA, 02138
        }
\subjclass{Primary: 05C10, 57M15, 68R10, 53A55, Secondary: 60B99, 94C99, 97K30}
\keywords{Curvature, topological invariants, graph theory, Euler characteristic }
\begin{document}
\maketitle

\begin{abstract}
We prove that the expectation value of the 
index function $i_f(x)$ over a probability space of injective function $f$ on any 
finite simple graph $G=(V,E)$ is equal to the curvature $K(x)$ at the vertex $x$.
This result complements and links Gauss-Bonnet $\sum_{x \in V} K(x) = \chi(G)$ 
and Poincar\'e-Hopf $\sum_{x \in V} i_f(x) = \chi(G)$ which both hold for 
arbitrary finite simple graphs. 
\end{abstract}

\section{Introduction}

For a general finite simple graph $G=(V,E)$, the {\bf curvature} at a vertex $x$ is defined as the finite sum
$$  K(x) = \sum_{k=0}^{\infty} (-1)^k \frac{V_{k-1}(x)}{k+1} \; , $$
where $V_k(x)$ is the number of $K_{k+1}$ subgraphs in the sphere $S(x)$ at a vertex $x$ and $V_{-1}(x)=1$. 
With this curvature, the Gauss-Bonnet theorem \cite{cherngaussbonnet}
$$ \sum_{x \in V} K(x) = \chi(G) \;  $$
holds, where $\chi(G)=\sum_{k=0}^{\infty} (-1)^k v_k$ is the {\bf Euler characteristic} of the graph, and where
$v_k$ is the number of $K_{k+1}$ subgraphs of $G$. For example, if $G$ contains no tetrahedral subgraph 
$K_4$, then each sphere $S(x)$ lacks triangular subgraphs and $K(x) = 1-V_0(x)/2+V_1(x)/3$ and 
$\chi(G)=|V|-|E|+|T|$, where $T$ is the set of triangular subgraphs $K_3$ of $G$. 
For an injective function $f$ on the vertex set $V$, the {\bf index} at a vertex is defined as the integer
$$  i_f(x) = 1-\chi(S^-(x)) \; , $$
where $S^-(x) = \{ y \in S(x) \; | \; f(y)<f(x) \; \}$ 
is the {\bf exit set} of the unit sphere $S(x)$ with respect to the gradient field of $f$ and
where $\chi(H)$ is the Euler characteristic of a subgraph $H$ of $G$. 
The index $i_f(x)$ is a discrete version of the {\bf Brouwer index} for gradient vector fields and satisfies the discrete 
Poincar\'e-Hopf theorem \cite{poincarehopf}
$$ \sum_{x \in V} i_f(x) = \chi(G) \; , $$
a result which holds for arbitrary simple graphs. Poincar\'e-Hopf gives a fast way to compute the
Euler characteristic because the subgraphs $S^-(x)$ are in general small. 
This allows to compute $\chi(G)$ for random graphs with hundreds of vertices, where counting cliques would be hopeless.\\

Since Poincar\'e-Hopf works for all injective $f$, also 
the {\bf symmetric index} $j_f(x) = [i_f(x)  + i_{-f}(x)]/2$ 
satisfies $\sum_{x \in V} j_f(x) = \chi(G)$. For cyclic graphs $G$, $j_f(x)$ is zero everywhere and agrees with curvature $K(x)$.
For trees, the symmetric index satisfies $j_f(x) = 1-{\rm deg}(x)/2=1-|V_0(x)|/2$ which adds up to $1$ for connected trees.
For graphs in which every unit sphere is a cyclic graph like the icosahedron,
we have $\chi(S^+(x))=\chi(S^-(x))$ and $j_f=i_f$. The curvature $K(x)$ is then $1-|V_0(x)|/2+|V_1(x)|/3=1-|S(x)|/6$
and the index is $i_f(x) = 1-|S^-(x)| = 1-s_f(x)/2$, where $s_f(x)$ is the number of sign changes of $f$ 
on the cyclic graph $S(x)$. A small computation shows that in that particular case, 
the integral over all Morse functions $\E[s(x)] = |S(x)|/3$ and 
that $\E[1-s(x)/2] = 1-|S(x)|/6$ agrees again with curvature. This special case of the
index expectation result led us to the more general result proven here.  \\

To keep this paper self contained, the proofs of the Gauss-Bonnet and Poincar\'e-Hopf results
are attached in an appendix. 
These general results become more geometric when dealing with graphs which are triangularizations of manifolds.
In that case, Gauss-Bonnet is a discretization of Gauss-Bonnet-Chern and Poincar\'e-Hopf is a discretisation
of the analogue classical result in the case of gradient fields. In the continuum, for Riemannian manifolds, 
Euler curvature is only defined for even dimensional manifolds. 
This paper is a step towards proving that for odd dimensional graphs the curvature
is always zero, something we know only in dimensions $1$ and $3$ so far. In an upcoming paper, using further developed
techniques initiated here but using geometric assumptions on graphs like that unit spheres share properties of the continuum
unit spheres in $d$ dimensions, we will prove that for odd dimensional geometric graphs, the symmetric index 
$j_f(x)$ is zero everywhere.  This matches the continuum case, where for Morse functions $f$ the Brouwer index at a critical point 
is $i_f(x) = (-1)^{m(x)}$ where $m(x)$ is the Morse index, the number of negative eigenvalues of the Hessian matrix 
$H(x)$ at the critical point $x$. In odd dimensions, of course
$j_f(x) = [i_f(x) + i_{-f}(x)]/2=0$ at every critical point implying immediately Poincar\'e's result that 
odd-dimensional manifolds have zero Euler characteristic. We still are in search for
continuum analogue of Theorem~(\ref{mainresult}). The technical difficulty is to find a natural probability 
space of $C^2$ Morse functions on a compact Riemannian manifold.
This is not a problem in the case of graph as we will see in the next section.

\section{Index expectation}

We first define the probability space of injective functions on the vertex set $V$ of the graph $G$.
Denote by $n$ the order of the graph, the number of vertices in $V$.

\begin{defn}
Let $\Omega \subset [-1,1]^n$ be the subset of all injective 
functions on $V$ taking values in $[-1,1]$. This is a $n$-dimensional Lebesgue space.
We assume that $\Omega$ is equipped with the product Lebesgue measure ${\rm P}$. 
This means that ${\rm P}[ \{ f \; | \; f(x) \in [a,b] \} ] = (b-a)/2$ if $-1 \leq a <b \leq 1$ and that the random variables 
$X_v(f) = f(v)$ giving the function values on the vertices $v \in V$ are independent and identically distributed. 
The injective functions are the complement of a union $\Sigma$ of hyper surfaces in $[-1,1]^n$ and have full measure.
Denote by ${\rm E}[i_f(x)]$ the {\bf expectation} of the index $i_f(x)$ at the vertex $x \in V$
of $f \in \Omega$ in this probability space $(\Omega,{\rm P})$.
\end{defn}

In order to prove the main theorem, we need an excursion to percolation theory 
(We do not look at classical problems but for background, see \cite{Grimmet,BollobasRiordan}), 
in particular site percolation, where vertices of a graph $S$ are killed with a certain probability:
given a background graph $S$ and a fixed graph $H$, denote by $v_H$ the number of times
the graph appears embedded in $S$. Now switch off vertices and edges connecting them in $S$ independently from 
each other with probability $p$. Call $v_H^p$ the expected number of graphs which appear now.
It depends on $S$ and $p$ as well as $H$. We will see however that $\int_0^1 v_H^p\; dp$
only depends on $H$. In our case, we need the situation when $H$ is the
$k$-dimensional simplex, the complete graph with $k+1$ vertices. \\

Denote by $V_k(x)$ the number of $H=K_{k+1}$ subgraphs in the sphere $S(x)$ and by $V_k^-(x)$ the number of $K_{k+1}$ 
subgraphs in the exit set $S^-(x)$, the subgraph of $S(x)$ generated by vertices $y$ where $f(y)<f(x)$. 
Let $v_H$ denote the number of simplices $K_{k+1}$ which appear as subgraphs in $S$. We can look at the 
Erd\"os-Renyi probability space \cite{bollobas}
of all subgraphs of $S$, where each vertex is included with probability $p$ and the subgraph is the graph generated
by these vertices. Let $v_H^p$ the expected number of $k$-dimensional simplices in the decimated subgraph of $S$. 
Obviously $v_H^p \leq v_k$, but how much?  
Computing the expectation $v_k^p$ of the survival rate depends on $S$ and $p$ But if $p$ is chosen randomly too 
at first and each vertex is deleted with probability $p$, the survival rate only depends on the order
of the clique and not on the graph:

\begin{propo}[Clique survival for site percolation]
$$  \int_0^1 \frac{\E_p[v_H^p]}{v_k} \; dp = \frac{1}{k+2} \; . $$
\end{propo}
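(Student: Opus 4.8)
The plan is to compute the inner percolation expectation $\E_p[v_H^p]$ for a fixed retention probability $p$ first, and only afterwards to average over $p$. For a fixed $p$, the site-percolation (Erd\"os--Renyi) measure on $S$ retains each vertex independently with probability $p$ and then takes the subgraph induced on the surviving vertices; I regard $v_H^p$ as the random number of $H=K_{k+1}$ subgraphs in this decimated graph. Enumerating the $v_k$ copies of $K_{k+1}$ that sit inside $S$, let $\mathbf{1}_\sigma$ be the indicator that all $k+1$ vertices of the copy $\sigma$ survive. Since the subgraph induced on a surviving vertex set automatically keeps every edge of $S$ between two surviving vertices, the copy $\sigma$ reappears in the decimated graph precisely when $\mathbf{1}_\sigma=1$, so that $v_H^p=\sum_\sigma \mathbf{1}_\sigma$.

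Next I would apply linearity of expectation, which is the one conceptual point worth stressing. Distinct copies of $K_{k+1}$ may share vertices, so the events $\{\mathbf{1}_\sigma=1\}$ are in general \emph{not} independent; linearity of expectation, however, is insensitive to these correlations, giving $\E_p[v_H^p]=\sum_\sigma \E_p[\mathbf{1}_\sigma]=\sum_\sigma \Prob_p[\mathbf{1}_\sigma=1]$. Because $K_{k+1}$ has exactly $k+1$ vertices and each survives independently with probability $p$, every term equals $p^{k+1}$, regardless of which copy $\sigma$ was chosen and regardless of the combinatorial structure of $S$. Summing the $v_k$ identical terms yields $\E_p[v_H^p]=v_k\,p^{k+1}$, so that $\E_p[v_H^p]/v_k=p^{k+1}$ depends on $p$ alone.

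Finally, dividing by $v_k$ and integrating over the uniformly chosen $p\in[0,1]$ reduces the claim to the elementary moment integral $\int_0^1 p^{k+1}\,dp=1/(k+2)$, exactly the asserted value. (Note that the symmetry $\int_0^1 p^{k+1}\,dp=\int_0^1 (1-p)^{k+1}\,dp$ makes the answer the same whether $p$ is read as the inclusion or the deletion probability.) There is no serious obstacle here: the entire argument rests on the observation that a simplex survives iff all its $k+1$ vertices do, so its survival probability depends only on the vertex count and the graph-dependent factor $v_k$ cancels. The only thing I would take care to verify is that the percolation really is pure site percolation --- that retaining a vertex set recovers the full induced clique with no edges independently deleted --- so that $\mathbf{1}_\sigma$ genuinely records survival of the copy $\sigma$; granting that, the computation is immediate.
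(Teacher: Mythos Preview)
Your proof is correct and is in fact cleaner than the paper's own argument. The paper proceeds by a decorrelation trick: it first notes the identity is immediate when all copies of $K_{k+1}$ in $S$ are vertex-disjoint (each survives with probability $p^{k+1}$, and $\int_0^1 p^{k+1}\,dp=1/(k+2)$), and then reduces the general case to that one by repeatedly splitting any vertex shared by two copies into two separate vertices, arguing that the integral $\int_0^1 \E_p[v_H^p]\,dp$ is unchanged under each such split. Your argument bypasses this reduction entirely by invoking linearity of expectation at fixed $p$, which is indifferent to the overlaps between simplices and yields the closed form $\E_p[v_H^p]=v_k\,p^{k+1}$ in one line. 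The paper's approach gives a vivid geometric picture of why the host graph $S$ is irrelevant --- it literally rebuilds $S$ as a disjoint union of simplices without changing the averaged count --- but your route is shorter, fully rigorous as written, and makes the extension to an arbitrary pattern graph $H$ (survival rate $1/(\mathrm{ord}(H)+1)$ for site percolation, $1/(\mathrm{size}(H)+1)$ for bond percolation) equally transparent.
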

\begin{proof}
The result is true if all the $K_{k+1}$ graphs in $S$ are disjoint because the survival of a single isolated
simplex with $k+1$ vertices is $\int_0^1 p^{k+1} \; dp = 1/(k+2)$.  \\
To prove the result in general, we decorrelate the situation by splitting 
vertices: pick a vertex $v$ where at least two such $K_{k+1}$ subgraphs $H_1,H_2$ intersect. 
Replace $v$ with $2$ vertices$v_1,v_2$ and place the edges to $H_1$ with $v_1$ and edges to 
$H_2$ with $v_2$. Distribute the other edges originally intersecting with $v$ arbitrarily with $v_1$ or $v_2$. 
To show that $\int_0^1 E_p[v_k^p] \; dp$ does not change when passing to the larger probability
space, we compare the case before and after splitting: 
before splitting, $v$ appears with probability $p$ and contributes to $H_1$ and $H_2$. This gives $\int_0^1 2p^2 \; dp = 1$. 
After splitting, both $v_i$ appear together with probability $p^2$, exactly one 
appears with probability $2 p(1-p)$ and none appears with probability $(1-p)^2$.
This also leads to a contribution $2 \int_0^1 p^2 \; dp + 1 \int_0^1 2 p(1-p) \; dp + 0 \int_0^1 (1-p)^2 \; dp = 1$.
We repeat like this with other intersection points of $H_1$ and $H_2$.
After all the correlations between $H_1,H_2$ are unlocked, we have a situation where the two simplices are independent
and where the expectation value is the same as before. Now proceed with any other pair of simplices $K_{k+1}$ 
in the same way.  
\end{proof}

The result means for $k=0$ that half of the points survive and for $k=1$ that $1/3$ of all 
edges are expected to survive. We ran Monte Carlo simulations with random host graphs $G$ which 
is fixed over the experiment, where each vertex is knocked off with probability $p$. 
Applying $m$ such disaster experiments, each time starting fresh with the same $G$ and then repeating the
experiments for various $p$ and averaging over disaster severeness $p$ confirms the result 
remarkably well with errors of the order $1/m$ and smaller simplices $K_{k+1}$ like  $k=2,3,4$. \\

This result is actually more general. The clique graph $K_{k+1}$ can be an arbitrary {\bf pattern graph} 
$H$. The result holds both for site and bond percolation situations.
For {\bf site percolation catastrophes}, the nodes are killed with probability $p$, 
in {\bf bond percolation catastrophes}, the edges are broken with probability $p$. 
For an arbitrary background host graph $S$ and any fixed pattern graph $H$, the expected {\bf decimation rate} 
for the number of patterns $H$ occurring in $S$ is $1/({\rm ord}(H)+1)$ for site disasters and $1/({\bf size}(H)+1)$ 
for bond disasters. These network stability results are remarkably universal: they are 
independent of the background graph $S$. They can serve as "rules of thumb" if one has no a priory idea about the 
disaster strength $p$. 

\begin{coro}[Averaging equation]
For every vertex $x \in V$ and all $k \geq 0$,
$$  \E[V_{k}^-(x)] = \frac{V_{k}(x)}{k+2}  \; . $$
\end{coro}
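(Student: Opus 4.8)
The plan is to reduce the averaging equation directly to the clique-survival proposition by conditioning on the value $f(x)$. First I would fix the vertex $x$ and condition on the event $f(x)=t$ for $t\in[-1,1]$. Because the random variables $X_v(f)=f(v)$ are i.i.d.\ uniform on $[-1,1]$, conditioning on $f(x)=t$ leaves the values $f(y)$ at the neighbors $y\in S(x)$ independent and uniform, and each such $y$ lies in the exit set $S^-(x)$ precisely when $f(y)<t$, an event of probability $p:=(t+1)/2$. Moreover, these events are mutually independent across distinct $y\in S(x)$, since the corresponding coordinates of $f$ are independent.

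This is exactly the site-percolation model of the proposition with background graph $S=S(x)$ and survival probability $p$: conditionally on $f(x)=t$, the exit set $S^-(x)$ has the same law as the decimated subgraph of $S(x)$ in which each vertex is kept independently with probability $p$. Consequently the conditional expectation of the number of $K_{k+1}$ subgraphs in the exit set equals the percolation quantity $v_{K_{k+1}}^p$ computed on $S(x)$; that is, $\E[V_k^-(x)\mid f(x)=t]=v_{K_{k+1}}^p$ with $p=(t+1)/2$.

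Next I would integrate out $t$ using the law of total expectation and the density $1/2$ of $f(x)$ on $[-1,1]$, then change variables $p=(t+1)/2$, $dp=dt/2$, which sends $[-1,1]$ onto $[0,1]$:
$$ \E[V_k^-(x)] = \int_{-1}^{1} v_{K_{k+1}}^{(t+1)/2}\, \frac{dt}{2} = \int_0^1 v_{K_{k+1}}^p \, dp \; . $$
Applying the clique-survival proposition to the host graph $S=S(x)$, for which $v_k=V_k(x)$ is the number of $K_{k+1}$ subgraphs, the right-hand integral equals $V_k(x)/(k+2)$, which is exactly the claim.

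The only delicate point is the identification carried out in the first two paragraphs: one must check that conditioning the product Lebesgue measure on a single coordinate genuinely produces the independent Bernoulli site-percolation law with parameter $p=(t+1)/2$, and, crucially, that the uniform distribution of $f(x)$ on $[-1,1]$ translates into the uniform averaging over $p\in[0,1]$ that the proposition requires. The linear change of variables makes this correspondence exact, so no extra weighting appears and the hypotheses of the proposition are met verbatim; everything else is bookkeeping with the tower property.
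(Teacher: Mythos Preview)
Your proof is correct and follows essentially the same approach as the paper: condition on $f(x)=t$, recognize that the exit set $S^-(x)$ is then distributed as independent site percolation on $S(x)$ with survival probability $p=(t+1)/2$, and integrate over $p$ to invoke the clique-survival proposition. The paper carries out the identical reduction (writing $f(x)=-1+2p$), though you spell out the conditioning and change of variables more carefully than the paper does.
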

\begin{proof}
Look at a central vertex $x$ connected to other vertices $z_i$. We want to apply the previous lemma 
for $S=S(x)$. Because $f$ takes values in $[-1,1]$, we can
assume $f(x)=-1+2p$ with $p \in [0,1]$. Having $f(x)$ fixed like that, we get a random site
percolation problem in the sphere $S(x)$, where each vertex $y \in S(x)$ appears 
with probability $p$ independently of each other. 
The expected number $V_k^-(x)$ of $k$-dimensional simplices $K_{k+1}$ divided by the number $V_k(x)$ 
of simplices in $S(x)$ is by the previous lemma equal to $1/(k+2)$ after we integrate over $p$. 
\end{proof}

For $k=0$, we have $\E[V_0^+(x)]=\E[V_0^-(x)]$ because the probability space is invariant under the involution $f \to -f$
so that $\E[V_0^-(x)] = V_0(x)/2$ follows.  For $k=1$, the averaging equations are $\E[V_1^-(x)] = V_1(x)/3$ and
$\E[V_1^-(x)]=\E[V_1^+(x)]=\E[W_1(x)]$, where $W_1(x)$
is the set of vertices connecting vertices from $V_1^+(x)$ to $V_1^-(x)$. \\

Here is the main result: 

\begin{thm}[Index expectation is curvature]
\label{mainresult}
For every vertex $x$, the expectation of $i_f(x)$ is $K(x)$:
$$  \E[i_f(x)] = K(x) \; . $$
\end{thm}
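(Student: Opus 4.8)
The plan is to reduce both sides of the claimed identity to the \emph{same} alternating sum over clique counts in the sphere $S(x)$, and then to match them term by term using the Averaging equation. The guiding observation is that the index and the curvature are built from the very same data: the index weights the simplex counts $V_k^-(x)$ of the exit set, whereas the curvature weights the full counts $V_{k-1}(x)$ with a shifted index and alternating denominators. The Corollary is tailored precisely to convert the former into the latter, so the whole theorem should collapse to a bookkeeping computation once that input is granted.

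First I would expand the index through the definition of the Euler characteristic. Since $S^-(x)$ is a finite graph, only finitely many of its simplex counts are nonzero, so
$$ i_f(x) = 1 - \chi(S^-(x)) = 1 - \sum_{k=0}^{\infty} (-1)^k V_k^-(x) , $$
which is a finite sum. Taking expectation and using linearity, legitimate because the sum is finite and each term is a bounded integer-valued random variable, I get
$$ \E[i_f(x)] = 1 - \sum_{k=0}^{\infty} (-1)^k \E[V_k^-(x)] . $$
Now I would substitute the Averaging equation $\E[V_k^-(x)] = V_k(x)/(k+2)$ from the Corollary, obtaining
$$ \E[i_f(x)] = 1 - \sum_{k=0}^{\infty} (-1)^k \frac{V_k(x)}{k+2} . $$

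It then remains to recognize the right-hand side as $K(x)$. Reindexing the curvature sum by $j = k-1$ and peeling off the $j=-1$ term, where the convention $V_{-1}(x)=1$ supplies exactly the constant $1$, yields
$$ K(x) = \sum_{k=0}^{\infty} (-1)^k \frac{V_{k-1}(x)}{k+1} = 1 - \sum_{j=0}^{\infty} (-1)^{j} \frac{V_j(x)}{j+2} , $$
which coincides with the expression for $\E[i_f(x)]$. I expect no genuine obstacle inside this theorem itself: the real content has already been discharged in the clique-survival proposition and its Averaging-equation corollary. The only points needing a word of care are cosmetic, namely checking that the alternating signs and the denominators $k+2$ line up correctly after the index shift, and confirming that the bare constant $1$ in the index is matched by the $V_{-1}(x)=1$ convention rather than double-counted. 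Once these are verified, the identity $\E[i_f(x)] = K(x)$ follows immediately.
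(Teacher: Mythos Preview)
Your proposal is correct and follows essentially the same route as the paper: expand $i_f(x)=1-\sum_k(-1)^kV_k^-(x)$, take expectation, invoke the Averaging equation $\E[V_k^-(x)]=V_k(x)/(k+2)$, and then reindex to recover the curvature sum using $V_{-1}(x)=1$. The only cosmetic difference is that you reindex the curvature formula to match the index expression, whereas the paper reindexes the index expression to match the curvature formula; the computations are otherwise identical.
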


\begin{proof}
We the averaging equation where $k$ is replaced by $k-1$
$$ \frac{V_{k-1}(x)}{k+1} = \E[ V_{k-1}^-(x)] $$
to see
\begin{eqnarray*}
   \E[ 1-\chi(S^-(x)) ] &=& 1-\sum_{k=0}^{\infty}  (-1)^k \E[V_k^-(x)] \\
                       &=& 1-\sum_{k=0}^{\infty} (-1)^k \frac{V_{k}(x)}{(k+2)} \\
                       &=& 1+\sum_{k=1}^{\infty} (-1)^k \frac{V_{k-1}(x)}{(k+1)} \\
                       &=& \sum_{k=0}^{\infty} (-1)^k \frac{V_{k-1}(x)}{(k+1)} \\
                       &=& K(x)   \; .
\end{eqnarray*}
\end{proof}

{\bf Remark.} This gives a new proof of the discrete Gauss-Bonnet result 
$$   \sum_{x \in V} K(x) = \chi(G) \;  $$
from 
$$   \sum_{x \in V} i_f(x) = \chi(G) \;  $$
simply by taking expectation.  But unlike in the continuum, where Gauss-Bonnet-Chern is
more difficult to prove (see e.g \cite{BergerPanorama,Cycon}), the discrete Gauss-Bonnet is easy to prove
directly. As demonstrated in the Appendix, Gauss-Bonnet for graphs is even more direct than Poincar\'e-Hopf.
We expect that for compact Riemannian manifolds, a new probabilistic link between Poincar\'e-Hopf and Gauss-Bonnet 
will allow to simplify the proof of the later considerably in higher dimensions. In the continuum, 
Poincar\'e-Hopf is orders of magnitudes less complex than Gauss-Bonnet-Chern 
because it is part of {\bf differential topology}, not needing
any Riemannian metric while Gauss-Bonnet is part of {\bf differential geometry} which uses more 
structure on the manifold $M$. It is the probability space on Morse function which will add part of the Riemannian
structure on $M$, enough to get curvature. In the continuum, there are various probability spaces which are good 
candidates to represent curvature as index expectation. They all appear to work for compact two-dimensional surfaces.

\section*{Appendix}

Here are the proofs of Gauss-Bonnet \cite{cherngaussbonnet} and Poincar\'e-Hopf \cite{poincarehopf} for simple 
graphs $G=(V,E)$ with consolidated notation. For Mathematica code, see \cite{KnillWolframDemo1,KnillWolframDemo2}.
More Mathematica code illustrating all the probabilistic aspects proven in this paper and
\cite{randomgraph} will become demonstrations too. \\

The first lemma generalizes Euler's handshaking lemma $\sum_{x \in V} V_0(x)=2 v_1$:

\begin{lemma}[Transfer equations]
\label{transferequations}
$\sum_{x \in V} V_{k-1}(x) = (k+1) v_k$.
\end{lemma}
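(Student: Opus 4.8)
The plan is to prove the identity by a double-counting argument over incident pairs. I would count, in two different ways, the set $\mathcal{P}$ of all pairs $(x,\sigma)$ in which $x \in V$ is a vertex and $\sigma$ is a $K_k$ subgraph contained in the sphere $S(x)$. Summing over the central vertex $x$ first, the number of such pairs is by definition $\sum_{x \in V} V_{k-1}(x)$, since $V_{k-1}(x)$ counts precisely the $K_k = K_{(k-1)+1}$ subgraphs lying in $S(x)$.

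The key combinatorial observation is a correspondence between these pairs and the $k$-dimensional simplices of $G$. Given a pair $(x,\sigma)$ with $\sigma$ a $K_k$ on vertices $\{y_1,\dots,y_k\} \subseteq S(x)$, every $y_i$ is adjacent to $x$ (because $y_i \in S(x)$) and the $y_i$ are mutually adjacent (because $\sigma$ is complete), so $\{x,y_1,\dots,y_k\}$ spans a $K_{k+1}$ in $G$. Conversely, given a $K_{k+1}$ subgraph $\tau$ on vertices $\{x_0,\dots,x_k\}$ together with a distinguished vertex $x_i \in \tau$, deleting $x_i$ leaves a $K_k$ that lies entirely in $S(x_i)$. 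I would check that these two operations are mutually inverse, establishing a bijection between $\mathcal{P}$ and the set of \emph{pointed} $k$-simplices, i.e.\ pairs consisting of a $K_{k+1}$ together with a choice of one of its $k+1$ vertices as center.

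Counting $\mathcal{P}$ the second way then gives $(k+1)\,v_k$, since each of the $v_k$ copies of $K_{k+1}$ contributes exactly $k+1$ pointed simplices, one for each vertex chosen as center. Equating the two counts yields the claim. The argument is essentially bookkeeping, so I do not expect a serious obstacle; the only point requiring care is the degenerate case $k=0$, where $V_{-1}(x)=1$ by convention. Here $\sigma$ is the empty simplex, each single vertex $x$ (a $K_1$) contributes the one pair $(x,\emptyset)$, and the multiplicity $k+1=1$ correctly reproduces $\sum_{x \in V} 1 = v_0 = |V|$. For $k=1$ the identity specializes to the handshaking lemma $\sum_{x \in V} V_0(x) = 2 v_1$ quoted in the statement, which serves as a useful sanity check.
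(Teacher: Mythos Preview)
Your proposal is correct and is essentially the same double-counting argument the paper gives: the paper simply observes that $V_{k-1}(x)$ equals the number of $K_{k+1}$ subgraphs containing $x$ (your bijection $(x,\sigma)\leftrightarrow(\tau,x_i)$), and then notes that summing this ``$k$-degree'' over all vertices counts each of the $v_k$ copies of $K_{k+1}$ exactly $k+1$ times. Your version just spells out the bijection and the $k=0$ convention more explicitly.
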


\begin{proof}
We can interpret $V_{k-1}(x)$ as the {\bf $k$-degree} of a vertex $v$, 
the number of $k$-simplices $K_{k+1}$ which contain $v$. 
The sum over all $k$-degrees $\sum_k {\rm deg}_k(x)$ is $k+1$ times the number $v_k$ of 
$k$-simplices $K_{k+1}$ in $G$.  \\
\end{proof}

Here is an other more pictorial proof of the transfer equations: 
draw and count handshakes from every vertex to every center of any $k$-simplex in two 
different ways. A first count sums up all connections leading to
a given vertex, summing then over all vertices leading to $\sum_{x \in V} V_{k-1}(x)$.
A second count is obtained from the fact that every simplex has $k+1$ hands reaching out 
and then sum over the simplices gives $(k+1) v_k$ handshakes.

\begin{thm}[Gauss-Bonnet]
$\sum_{x \in V} K(x) = \chi(G)$.
\label{theorem1}
\end{thm}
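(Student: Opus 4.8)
The plan is to prove Gauss-Bonnet directly from the transfer equations of Lemma~\ref{transferequations}, by summing the local curvature $K(x)$ over all vertices and interchanging the order of the two summations.

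First I would write out the definition of curvature at each vertex and sum it over the vertex set:
\begin{eqnarray*}
  \sum_{x \in V} K(x) &=& \sum_{x \in V} \sum_{k=0}^{\infty} (-1)^k \frac{V_{k-1}(x)}{k+1} \\
                      &=& \sum_{k=0}^{\infty} \frac{(-1)^k}{k+1} \sum_{x \in V} V_{k-1}(x) \; .
\end{eqnarray*}
The swap is legitimate because for a finite graph only finitely many terms are nonzero: $V_{k-1}(x)$ vanishes once $k-1$ exceeds the clique number of $S(x)$. Next I would substitute the transfer equation $\sum_{x \in V} V_{k-1}(x) = (k+1) v_k$, which is exactly the identity supplied by Lemma~\ref{transferequations} and is the whole reason the factor $k+1$ sits in the denominator of the curvature.

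After this substitution the troublesome factors $1/(k+1)$ and $(k+1)$ cancel, leaving
$$ \sum_{x \in V} K(x) = \sum_{k=0}^{\infty} (-1)^k v_k = \chi(G) \; , $$
which is precisely the definition of the Euler characteristic given in the introduction. This finishes the proof.

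I do not expect any genuine obstacle here: the statement is essentially a one-line consequence of the transfer equations once the curvature is written out. The only point requiring a word of care is justifying the interchange of the two sums, but finiteness of the graph makes every sum a finite one, so no convergence issue arises. The design of the curvature normalization, dividing $V_{k-1}(x)$ by $k+1$, is exactly what makes the transfer factor cancel, so the real content of the theorem lives in Lemma~\ref{transferequations} rather than in this final assembly.
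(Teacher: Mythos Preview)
Your proof is correct and follows essentially the same approach as the paper: expand the curvature sum, interchange the finite double sum, apply the transfer equations of Lemma~\ref{transferequations}, and cancel the $k+1$ factors to obtain $\sum_k (-1)^k v_k = \chi(G)$. The paper's proof is slightly terser but identical in substance.
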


\begin{proof}
By definition of curvature, we have
$$  \sum_{x \in V} K(x) = \sum_{x \in V} \sum_{k=0}^{\infty} (-1)^k \frac{V_{k-1}(x)}{k+1} \; . $$
Since the sums are finite, we can change the order of summation. Using the transfer equations 
(\ref{transferequations}), we get
$$  \sum_{x \in V} K(x) = \sum_{k=0}^{\infty} \sum_{x \in V} (-1)^k \frac{V_{k-1}(x)}{k+1}  
                        = \sum_{k=0}^{\infty} (-1)^k v_k = \chi(G) \;. $$
\end{proof}

Given an injective function $f$ on $V$
and a vertex $x \in V$, we can look at the set $W_k(x)$ of all $k$ simplices in the sphere $S(x)$
for which at least one vertex $y$ satisfies $f(y)<f(x)$ and an other vertex $z$ satisfies $f(z)>f(x)$.

\begin{lemma}[Intermediate equations]
$\sum_{x \in V} W_k(x) = k v_{k+1}$
\label{intermediateequations}
\end{lemma}

\begin{proof}
For each of the $v_{k+1}$ simplices $K_{k+2}$ in $G$, 
there are $k$ vertices $x$ which have neighbors in $K_{k+2}$ with both larger and 
smaller values. For each of these $k$ vertices $x$, we can look
at the unit sphere $S(x)$ of $v$. The simplex $K_{k+2}$ defines a 
$k$-dimensional simplex $K_{k+1}$ in that unit sphere. Each of them 
adds to the sum $\sum_{x \in V} W_k(x)$ which consequently is equal to 
$k v_{k+1}$.
\end{proof}

\begin{lemma}[Index stability]
The index sum $\sum_{x \in V} i_f(x)$ is independent of $f$. 
\label{indexstability}
\end{lemma}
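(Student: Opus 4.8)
The plan is to exploit that the index $i_f(x)=1-\chi(S^-(x))$ depends on $f$ only through the linear order that $f$ induces on $V$, since $S^-(x)$ is determined entirely by the comparisons $f(y)<f(x)$. Thus $\sum_{x\in V} i_f(x)$ is really a function of the ordering of $V$, and it suffices to show it is unchanged under an \emph{adjacent transposition}: a swap of the values at two vertices $a,b$ that are consecutive in the order, meaning no vertex value lies strictly between $f(a)$ and $f(b)$. Any two injective functions induce orderings connected by a sequence of such swaps (the bubble-sort observation), so invariance under a single adjacent transposition yields the Lemma.

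First I would check that such a swap leaves $i_f(x)$ unchanged for every $x\neq a,b$. Since $a$ and $b$ are consecutive, any other vertex $x$ satisfies either $f(x)<\min(f(a),f(b))$ or $f(x)>\max(f(a),f(b))$; in both cases $a$ and $b$ lie on the same side of $f(x)$ before and after the swap, so the vertex set $S^-(x)$, hence its induced subgraph and its Euler characteristic, is untouched. If moreover $a\not\sim b$ in $G$, the same reasoning applied at $a$ and $b$ shows nothing changes at all, so the only interesting case is $a\sim b$.

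The crux is the adjacent case $a\sim b$. Writing $A^-=\{y\in S(a): f(y)<f(a)\}$ and $B^-=\{y\in S(b): f(y)<f(a)\}$ with $f(a)<f(b)$ before the swap, the exit sets are $S^-(a)=A^-$ and $S^-(b)=B^-\cup\{a\}$, while after the swap they become $A^-\cup\{b\}$ and $B^-$. Hence
\[
  \sum_{x} i_f(x)-\sum_{x} i_{f'}(x)=[\chi(A^-\cup\{b\})-\chi(A^-)]-[\chi(B^-\cup\{a\})-\chi(B^-)].
\]
I would finish with the elementary identity that attaching a vertex $y$ to a finite simple graph $H$ raises the Euler characteristic by $1-\chi(L)$, where $L$ is the subgraph of $H$ spanned by the neighbours of $y$ (this is just the bookkeeping $1-d+\#K_3-\cdots$ for the new simplices through $y$). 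Applied here, both links equal the induced subgraph on the common neighbours of $a$ and $b$ whose value is below $f(a)$, so both bracketed differences equal $1-\chi(C^-)$ with the very same $C^-$; they cancel and the index sum is invariant.

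The main obstacle is the final bookkeeping: proving the vertex-attachment formula for $\chi$ cleanly and verifying that the two links genuinely coincide, each being exactly $\{y: y\sim a,\ y\sim b,\ f(y)<f(a)\}$ with identical induced edges. One could instead try to leverage the Intermediate equations $\sum_x W_k(x)=k\,v_{k+1}$, but combined with the partition $V_k=V_k^-+V_k^++W_k$ of simplices in $S(x)$ these only give invariance of the \emph{symmetric} sum $\sum_x[i_f(x)+i_{-f}(x)]$, not of $\sum_x i_f(x)$ itself; so I would keep the adjacent-transposition argument as the primary route.
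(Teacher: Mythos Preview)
Your proof is correct and follows essentially the same deformation strategy as the paper: both reduce to checking that when two order-adjacent neighbouring vertices $a\sim b$ swap positions, the sum $\sum_x i_f(x)$ is unchanged, the cancellation coming from the fact that the new simplices in $S^-(a)$ through $b$ match exactly the simplices lost in $S^-(b)$ through $a$, both being indexed by the common link $C^-=\{y:y\sim a,\ y\sim b,\ f(y)<f(a)\}$. Your packaging via the vertex-attachment identity $\chi(H\cup\{y\})-\chi(H)=1-\chi(L)$ is cleaner and more explicit than the paper's level-by-level bookkeeping with the auxiliary counts $U_k(x),U_k(y)$, but the underlying argument is the same.
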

\begin{proof}
The proof is a deformation argument. Fix a vertex $x$ and change the value of the function 
$f(x)$ such that a single neighboring point $y \in S(x)$, the value $f(y)-f(x)$ changes sign 
during the deformation. Without loss of generality we can 
assume that the value of $f$ at $x$ has been positive initially and gets negative. 
Now $S^-(x)$ has gained a point $y$ and $S^-(y)$ has lost a point.  \\
To see that $\chi(S^-(x)) + \chi(S^-(y))$ stays constant, 
we check this each individual simplex level and show $V_k^+(x)+ V_k^-(t)$ stays constant,
where $V_k^{\pm}(x)$ denotes the number $K_{k+1}$ subgraphs of 
$S(x)$ which connect points within $S(x)^{\pm}$.
Since $i(x) = 1- \sum_k (-1)^k V_k^-(x)$, the lemma is proven if $V_k^-(x) + V_k^-(x)$ stays constant 
under the deformation. Let $U_k(x)$ denote the number of $K_{k+1}$ subgraphs of
$S(x)$ which contain $y$. Similarly, let $U_k(y)$ the number of $K_{k+1}$ subgraphs of
$S(y)$ which do not contain $x$ but are subgraphs of $S^-(y)$ with $x$.
The sum of $K_{k+1}$ graphs of $S^-(x)$ changes by $U_k(y)-U_k(x)$.
When summing this over all vertex pairs $x,y$, we get zero.
\end{proof}

\begin{thm}[Poincar\'e-Hopf]
$\sum_{x \in V} i_f(x) = \chi(G)$
\end{thm}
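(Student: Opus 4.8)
The plan is to reduce the statement to the three lemmas already assembled, combining the global counting identities with the symmetry that index stability provides. First I would write $\sum_{x \in V} i_f(x) = |V| - \sum_{x \in V} \chi(S^-(x))$, so that everything hinges on evaluating the single quantity $\sum_{x} \chi(S^-(x))$. Expanding $\chi(S^-(x)) = \sum_k (-1)^k V_k^-(x)$ reduces this to understanding the level sums $\sum_x V_k^-(x)$, which I do not attack directly but only in a symmetrized form.

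The key move is to symmetrize using Lemma~\ref{indexstability}. The whole setup is invariant under the involution $f \mapsto -f$, under which the exit set $S^-(x)$ and the entrance set $S^+(x)$ are interchanged. Since Lemma~\ref{indexstability} asserts that $\sum_x i_f(x)$ does not depend on $f$, applying it to $f$ and to $-f$ forces $\sum_x \chi(S^-(x)) = \sum_x \chi(S^+(x))$. Hence I may replace $\sum_x \chi(S^-(x))$ by $\frac{1}{2}\sum_x [\chi(S^-(x)) + \chi(S^+(x))]$, so that at each simplex level only the symmetric combination $V_k^-(x) + V_k^+(x)$ appears.

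Now the two counting lemmas finish the job. Because $f$ is injective, every $K_{k+1}$ in $S(x)$ is either entirely below $x$, entirely above $x$, or mixed, giving the exhaustive disjoint decomposition $V_k(x) = V_k^-(x) + V_k^+(x) + W_k(x)$. Reindexing Lemma~\ref{transferequations} into the form $\sum_x V_k(x) = (k+2)\, v_{k+1}$ and using Lemma~\ref{intermediateequations} as $\sum_x W_k(x) = k\, v_{k+1}$ yields $\sum_x [V_k^-(x) + V_k^+(x)] = 2\, v_{k+1}$ at each level $k$. Feeding this into the symmetrized expression gives $\sum_x \chi(S^-(x)) = \sum_{k \geq 0} (-1)^k v_{k+1} = v_0 - \chi(G) = |V| - \chi(G)$, using $v_0 = |V|$, and therefore $\sum_x i_f(x) = |V| - (|V| - \chi(G)) = \chi(G)$.

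I expect the main obstacle to be bookkeeping rather than anything conceptual: one must line up the indices so that the reindexed transfer equation and the intermediate equation refer to the same $v_{k+1}$, and must confirm that the three-way split of $V_k(x)$ is genuinely exhaustive and disjoint, which is exactly where injectivity of $f$ enters. The one substantive input is Lemma~\ref{indexstability}. Without the $f$-independence of the index sum, the symmetrization step, and with it the reduction to the metric-free counting identities, would collapse; note that index stability by itself only controls the alternating sum $\sum_x \chi(S^-(x))$, never the individual level sums $\sum_x V_k^-(x)$, which is precisely why the transfer and intermediate equations are needed to pin down the symmetric combination level by level.
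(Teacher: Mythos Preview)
Your proposal is correct and follows essentially the same route as the paper: symmetrize via index stability and $f\mapsto -f$, use the disjoint decomposition $V_k(x)=V_k^-(x)+V_k^+(x)+W_k(x)$, and combine the reindexed transfer equation $\sum_x V_k(x)=(k+2)v_{k+1}$ with the intermediate equation $\sum_x W_k(x)=k\,v_{k+1}$ to get $\sum_x[V_k^-(x)+V_k^+(x)]=2v_{k+1}$ at each level. The only cosmetic difference is that you halve early rather than doubling the index sum as the paper does.
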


\begin{proof}
The number of $k$-simplices $V_k^-(x)$ in the exit set $S^-(x)$
and the number of $k$-simplices $V_k^+(x)$ in the entrance set $S^+(x)$ are complemented 
within $S(x)$ by the number $W_k(x)$ of $k$ simplices which contain both vertices from $S^-(x)$ 
and $S^+(x)$. By definition, $V_k(x) = W_k(x) + V_k^+(x) + V_k^-(x)$.
By the index stability lemma (\ref{indexstability}), the index $i_f(x)$ is the same for all 
injective functions $f:V \to \mathbf{R}$. Let  $\chi'(G) = \sum_{x \in V} i_f(x)$. 
Because replacing $f$ and $-f$ switches $S^+$ with $S^-$ and the sum is the same, we
can prove $2 v_0 - \sum_{x \in V} \chi(S^+(x)) + \chi(S^-(x)) = 2 \chi'(G)$ instead.
The transfer equations Lemma~(\ref{transferequations}) and intermediate equations Lemma~(\ref{intermediateequations}) give
\begin{eqnarray*}
 2 \chi'(G) &=& 2v_0 + \sum_{k=0}^{\infty} (-1)^k \sum_{x \in V} (V_k^-(x) + V_k^+(x)) \\
            &=& 2v_0 + \sum_{k=0}^{\infty} (-1)^k \sum_{x \in V} (V_k(x)   - W_k(x)  ) \\
            &=& 2v_0 + \sum_{k=0}^{\infty} (-1)^k [ (k+2) v_{k+1} - k v_{k+1}]  \\
            &=& 2v_0 + \sum_{k=1}^{\infty} (-1)^k 2 v_{k} = 2 \chi(G) \; .
\end{eqnarray*}
\end{proof}

\vspace{12pt}
\bibliographystyle{plain}
%\bibliography{geometry,edu}

\end{document}